\documentclass[a4j]{article}
    \usepackage{amsthm,geometry}
    \usepackage{amsmath, amssymb,amsfonts, mathtools,ascmac,subcaption,color}
\usepackage{graphicx}
    \usepackage{caption,subcaption,comment}
    \usepackage{bbm,bm}
    \numberwithin{equation}{section}
    \theoremstyle{definition}
    \newtheorem{dfn}{Definition}[section]
    
    \newtheorem{lem}[dfn]{Lemma}
    \newtheorem{thm}[dfn]{Theorem}
    \newtheorem{cor}[dfn]{Corollary}
    \newtheorem{ass}{Assumption}
    \newtheorem{rem}{Remark}
    
    \newcommand{\E}{\mathsf{E}}
    \newcommand{\Prob}{\mathsf{P}}
    \newcommand{\Pois}{\mathrm{Pois}}
    \newcommand{\Bin}{\mathrm{B}}
    
    \newcommand{\bias}{\mathrm{bias}}
    \newcommand{\biasPois}{\mathrm{bias}^\mathrm{Pois}}
    
    \newcommand{\dx}{\mathrm{d}x}
    \newcommand{\dy}{\mathrm{d}y}

\title{Asymptotic bias of the plug-in Shannon entropy estimator \\ under a regularly varying occupancy model}
\author{Takato Hashino\thanks{Joint Graduate School of Mathematics for Innovation, Kyushu University, Fukuoka, Japan.} \and Koji Tsukuda\thanks{Faculty of Mathematics, Kyushu University, Fukuoka, Japan.}}
\date{}
\begin{document}
\maketitle

\begin{abstract}
Estimating the Shannon entropy of discrete distributions with countably infinite support is a challenging problem.
In this paper, we investigate the bias of the plug-in estimator $\hat{H}_n$ for the Shannon entropy $H(\bm{p})$ under an occupancy model whose frequency sequence exhibits regular variation with tail index $\alpha\in(0,1)$.
Using Poissonization and the theory of regular variation, we establish the asymptotic relation $|\E[\hat H_n] - H(\bm{p})| \sim n^{\alpha-1}L(n)C_\alpha$, where $L$ is a slowly varying function and $C_\alpha$ is an explicit constant depending only on $\alpha$ that admits an integral representation.
Our result shows that the asymptotic behavior of the bias of the plug-in estimator under power-law frequency distributions is determined by the tail behavior of the underlying distribution.
\end{abstract}

\noindent
{\bf Keywords}: entropy estimation; occupancy problem; Poissonization; regular variation \\
{\bf Mathematics Subject Classification 2020}: 60C05, 62G20, 94A17

\section{Introduction}
Estimating the Shannon entropy of discrete probability distributions is an important problem in information theory, statistics, and related fields.
In entropy estimation, the plug-in estimator, which replaces the underlying distribution with the empirical distribution, is widely used as a standard approach; see, for example,~\cite{Basharin_1959,Hausser_2009}. 
Compared with the finite support setting, estimating the entropy of distributions with countably infinite support is more challenging and has therefore attracted considerable attention, particularly regarding consistency and convergence rates. 
In particular, Antos and Kontoyiannis~\cite{Antos_2001} analyzed the convergence rates and showed that, without imposing any restrictions on the class of distributions, no estimator can achieve a uniform rate of convergence. 
The regularly varying occupancy model, studied by Karlin~\cite{Karlin_1967} and Gnedin et al.~\cite{Gnedin_2007}, is known as a natural framework for describing heavy-tailed discrete distributions with infinite support. 
As a regular variation condition on the frequency sequence leads to a precise asymptotic theory, we study entropy estimation under the regularly varying occupancy model.

We investigate the bias of the plug-in estimator for the Shannon entropy under the regularly varying occupancy model.
Combining the Poissonization technique with the theory of regular variation, we derive the leading asymptotic term of the bias, expressed in terms of the tail index and the slowly varying function.
Moreover, whereas Antos and Kontoyiannis~\cite{Antos_2001} established convergence rates under polynomial tail conditions, our analysis extends their framework to general regularly varying tails and identifies the exact leading asymptotic constant.
Our result provides a precise asymptotic characterization of the bias under the regularly varying occupancy model.

The remainder of this paper is organized as follows.
Section~\ref{sec:setting} introduces the regularly varying occupancy model, the plug-in Shannon entropy estimator, and assumptions.
Section~\ref{sec:pois} presents the Poissonization technique and a de-Poissonization bound for the difference in expectations between the two plug-in estimators.
Section~\ref{sec:main} states the main results on the asymptotic bias of the plug-in Shannon entropy estimator and presents numerical simulations illustrating the asymptotic approximation.
The proofs are provided in Section~\ref{sec:proof}.

\section{Setting}\label{sec:setting}
Consider a discrete probability distribution $\bm{p}=(p_1,p_2,\ldots)$ with countably infinite support.
We assume that $p_j>0$ for all $j\ge1$ and $\sum_{j\ge1}p_j=1$. 
Let 
\[
\nu(\dx)=\sum_{j\ge1}\delta_{p_j}(\dx)
\]
be the counting measure associated with $\bm p$, and 
\[
\vec{\nu}(x)\coloneqq \nu([x,1]) \quad (x \in (0,1])
\]
the corresponding tail function.
The Shannon entropy $H(\bm{p})$ of $\bm{p}$ is given by
\[
H(\bm{p}) \coloneqq - \sum_{j \geq 1} p_j \log p_j = \int_0^1 (-x \log x) \nu(\dx).
\]
The following assumption will be used throughout the paper.

\begin{ass}\label{ass:1}
There exist $\alpha \in (0,1)$ and a slowly varying function $L$ such that
\begin{equation}
\vec{\nu}(x)=x^{-\alpha} L(x^{-1}). \label{def:assumption}
\end{equation}    
\end{ass}

\begin{rem}
As $\bm{p}$ is infinite-dimensional, the convergence of the infinite series defining $H(\bm{p})$ is not immediate; see, for example, \cite{Baccetti_2013}.
From Proposition 23 of \cite{Gnedin_2007} and Theorem 5.8 of \cite{Hashino_2026}, $H(\bm{p})$ is finite under Assumption~\ref{ass:1}.
\end{rem}

Recall that a slowly varying function $L$ satisfies
\[
\lim_{x\to\infty} \frac{L(\lambda x)}{L(x)}=1
\]
for any $\lambda>0$.
The condition \eqref{def:assumption} is standard in the literature on occupancy problems; see, for example,~\cite{Gnedin_2007,Karlin_1967}.
We refer to this framework as the regularly varying occupancy model.

Let $\bm{X}_n=(X_{n,j})_{j\ge1}$ be an infinite-dimensional random vector following the multinomial distribution with sample size $n (\geq 2)$ and probability vector $\bm{p}$, whose joint probability mass function is given by
\[
\Prob(X_{n,j} = n_j, \, j \in \mathbb{N}) = \frac{n!}{\prod_{j \geq 1} n_j!} \prod_{j \geq 1} p_j^{n_j}
\]
for non-negative integers $(n_j)_{j \geq 1}$ satisfying $\sum_{j \geq 1} n_j = n$.
The plug-in estimator $\hat{H}_n$ for $H(\bm{p})$ is given by
\begin{align*}
    \hat{H}_n 
    &\coloneqq  -\sum_{j\ge1} \frac{X_{n,j}}{n} \log\left(\frac{X_{n,j}}{n}\right) .
\end{align*}
In what follows, we investigate the asymptotic behavior of 
\[
|\bias(n)| \coloneqq |\E[\hat{H}_n] - H(\bm{p})|
= H(\bm{p}) - \E[\hat{H}_n] ,
\]
which is the absolute value of the bias of $\hat{H}_n$.

\section{Poissonization}\label{sec:pois}

We employ the Poissonization technique. 
Instead of a fixed sample size $n$, we consider a random sample size $N(t)$ following a Poisson distribution with mean $t > 0$, i.e., $N(t) \sim \Pois(t)$. 
Denote by $\bm{X}(t) = (X_j(t))_{j\ge1}$ the counts of observations in each category from the sample of size $N(t)$.
Then,
\[X_j(t)\sim\Pois(tp_j), \qquad j\in\mathbb N, \]
independently.
The Poissonized plug-in estimator $\hat{H}(t)$ is given by
\begin{align*}
    \hat{H}(t)
    &\coloneqq  -\sum_{j\ge1} \frac{X_{j}(t)}{t} \log\left(\frac{X_{j}(t)}{t}\right).
\end{align*}
Quantities defined in the Poissonized model will be evaluated at $t=n$.
The following lemma quantifies the difference between the expectations of the original and Poissonized estimators.

\begin{lem}\label{lem:de-pois}
If $H(\bm{p}) < \infty$, then
\[
 | \E[\hat H_n]-\E[\hat H(n)] | \leq n^{-1}.
\]
\end{lem}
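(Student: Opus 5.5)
The plan is to reduce the difference of expectations to a sum over categories, rewrite each summand through a size-biasing identity, and then compare Poisson and binomial laws through stochastic and convex ordering. Throughout, write $f(k)=k\log k$ with $f(0)=0$.

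\textbf{Reduction to one-dimensional comparisons.} Since $\sum_j X_{n,j}=n$, expanding the logarithm gives $\hat H_n=\log n-\frac1n\sum_j f(X_{n,j})$. Similarly,
\[
\hat H(t)=-\tfrac1t\sum_j f(X_j(t))+\tfrac{N(t)}{t}\log t .
\]
Using $\E[N(n)]=n$, we obtain
\[
\E[\hat H_n]-\E[\hat H(n)]=\frac1n\sum_{j\ge1}\Bigl(\E f(\Pois(np_j))-\E f(\Bin(n,p_j))\Bigr).
\]
Here $\Bin(n,p_j)$ is the marginal law of $X_{n,j}$. The hypothesis $H(\bm p)<\infty$ will be used only to justify exchanging expectation and summation. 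Indeed, each of $\sum_j\E f(\cdot)$ should be bounded by $n\log n+nH(\bm p)$-type quantities: for instance, $\E f(X)\le \E X\cdot\log(\E X+1)$-type bounds by concavity, or more directly via the identity below. Hence both series converge absolutely.

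\textbf{Size-biasing identities.} For $X\sim\Pois(\lambda)$ one has $\E[Xg(X)]=\lambda\E[g(X+1)]$. For $X\sim\Bin(n,p)$ one has $\E[Xg(X)]=np\,\E[g(Y+1)]$ with $Y\sim\Bin(n-1,p)$. Applying both with $g=\log$ gives
\[
\E f(\Pois(np_j))-\E f(\Bin(n,p_j))=np_j\,\Delta_j,
\qquad
\Delta_j\coloneqq\E\log(1+\Pois(np_j))-\E\log(1+\Bin(n-1,p_j)).
\]
It therefore suffices to show $0\le\Delta_j\le 1/n$. Then $|\E[\hat H_n]-\E[\hat H(n)]|\le\frac1n\sum_j np_j\cdot\frac1n=\frac1n$.

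\textbf{Two-sided bound on $\Delta_j$.} This is the main step. The function $\varphi(x)=\log(1+x)$ is increasing, concave, and has $\varphi'(x)=1/(1+x)$.
\begin{itemize}
\item \emph{Lower bound.} It is classical that $\Bin(m,p)\le_{\mathrm{cx}}\Pois(mp)$, since $\mathrm{Bernoulli}(p)\le_{\mathrm{cx}}\Pois(p)$ and the convex order is preserved under independent sums. Because $\varphi$ is concave, this gives $\E\varphi(\Bin(n-1,p))\ge\E\varphi(\Pois((n-1)p))$, which is the wrong direction. Instead I will use $\Bin(n-1,p)\le_{\mathrm{cx}}\Pois((n-1)p)$ only in the upper bound. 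For the lower bound I will use that $\Pois(np)$ stochastically dominates $\Pois((n-1)p)$, and compare $\Bin(n-1,p)$ with $\Pois((n-1)p)$ through concavity. If this pairing fails, I will fall back on a direct coupling: write $\Pois(np)=\Pois((n-1)p)+\Pois(p)$, and bound the binomial from above via $\Bin(n-1,p)\le_{\mathrm{st}}$ a suitable Poisson mixture.
\item \emph{Upper bound.} By concavity and $\Bin(n,p)\le_{\mathrm{cx}}\Pois(np)$, we get $\E\varphi(\Pois(np))\le\E\varphi(\Bin(n,p))$. Coupling $\Bin(n,p)=Y+B$ with $B\sim\mathrm{Bernoulli}(p)$ independent of $Y$, and using concavity of $\varphi$,
\[
\E\varphi(Y+B)-\E\varphi(Y)\le p\,\E\Bigl[\frac1{1+Y}\Bigr]=p\cdot\frac{1-(1-p)^n}{np}\le\frac1n .
\]
\end{itemize}
I expect the sign bookkeeping in the lower bound to be the only delicate point. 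Note that even without establishing $\Delta_j\ge0$, a symmetric estimate $|\Delta_j|\le1/n$ suffices. For $-\Delta_j$ I would try first to bound $\E\varphi(\Bin(n-1,p))-\E\varphi(\Pois(np))$ by comparing $\Bin(n-1,p)\le_{\mathrm{st}}\Bin(n,p)$ and then using the same Bernoulli-increment estimate together with the Stein–Chen-type bound between $\Bin(n,p)$ and $\Pois(np)$ for the $1$-Lipschitz function $\varphi$.
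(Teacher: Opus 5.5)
\textbf{Gap: the lower bound on $\Delta_j$ is missing.} Your reduction to $\frac1n\sum_j np_j\Delta_j$ and the size-biasing identities match the paper's argument. So does your upper bound $\Delta_j\le 1/n$, which uses concavity of $\varphi$ with $\Bin(n,p)\le_{\mathrm{cx}}\Pois(np)$ and then the Bernoulli-increment estimate. The problem is the other side. You never establish $\Delta_j\ge 0$, or even $\Delta_j\ge -1/n$. Without it you only get $\E[\hat H_n]-\E[\hat H(n)]\le n^{-1}$, not the bound on the absolute value.

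None of your fallback routes closes this as stated:
\begin{itemize}
\item Route (a) fails for the reason you already note.
\item For (b), the natural comparison $\Bin(n-1,p)\le_{\mathrm{st}}\Pois(np)$ is false when $p$ is close to $1$. For $n=11$, $p=0.9$ one has $\Prob(\Bin(10,0.9)=0)=10^{-10}<e^{-9.9}=\Prob(\Pois(9.9)=0)$. The ``suitable Poisson mixture'' is left unspecified.
\item For (c), after $\Bin(n-1,p)\le_{\mathrm{st}}\Bin(n,p)$ you must bound the nonnegative quantity $\E\varphi(\Bin(n,p))-\E\varphi(\Pois(np))$ from above by $1/n$. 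A Stein--Chen estimate using only the $1$-Lipschitz property of $\varphi$ controls it by $d_W(\Bin(n,p),\Pois(np))$. For a fixed atom $p_j$ this distance grows like $\sqrt n$, so the route cannot give $|\Delta_j|\le 1/n$.
\end{itemize}

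\textbf{The fix uses tools you already invoke.} Apply the convex order to the convex function $f(x)=x\log x$ itself, before size-biasing. Since $\Bin(n,p_j)\le_{\mathrm{cx}}\Pois(np_j)$, you get $\E f(\Pois(np_j))\ge \E f(\Bin(n,p_j))$. By your own identity this difference equals $np_j\Delta_j$, so $\Delta_j\ge 0$ for every $j$. Combined with your upper bound, this gives $0\le \E[\hat H_n]-\E[\hat H(n)]\le n^{-1}$. This is exactly how the paper removes the absolute value in its first display.
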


\section{Main result}\label{sec:main}
The following theorem establishes the asymptotic behavior of the Poissonized absolute bias
\[
|\biasPois(t) |\coloneqq H(\bm p)-\E[\hat H(t)].
\]

\begin{thm}\label{thm:Main}
Let Assumption~\ref{ass:1} hold.
Then,
\[
|\biasPois(t)|
\sim t^{\alpha-1}L(t)C_\alpha
\]
as $t\to\infty$,
where
\begin{align*}
C_\alpha &\coloneqq \int_0^\infty y^{-\alpha}\psi(y)\,\dy,\\
\psi(y) &\coloneqq -\log y-1+\E\!\left[(P_y+1-y)\log(P_y+1)\right] \quad (y>0),
\end{align*}
with $P_y$ being a Poisson random variable with mean $y \,(>0)$.
\end{thm}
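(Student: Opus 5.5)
We write the Poissonized bias as an integral against $\nu$ of a per-atom function, rewrite that function through $\psi$, and then apply a regular-variation Abelian argument.

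\textbf{Per-atom decomposition.} Since the $X_j(t)$ are independent, linearity gives
\[
H(\bm p)-\E[\hat H(t)]=\sum_{j\ge1}\Bigl(-p_j\log p_j+\E\Bigl[\tfrac{X_j(t)}{t}\log\tfrac{X_j(t)}{t}\Bigr]\Bigr)=\int_0^1 g_t(x)\,\nu(\dx),
\]
where $g_t(x)=-x\log x+t^{-1}\E[P_{tx}\log P_{tx}]-x\log t$.

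\textbf{Rewriting $g_t$ via $\psi$.} Set $y=tx$, so that $-x\log x-x\log t=-x\log y$. For $P\sim\Pois(y)$, the identity $\E[P f(P)]=y\E[f(P+1)]$ gives $\E[P\log P]=y\E[\log(P_y+1)]$. I now verify the form of $\psi$. We have $\E[(P_y+1-y)\log(P_y+1)]=\E[(P_y+1)\log(P_y+1)]-y\E\log(P_y+1)$. A shift identity does not directly reduce this, so I simply accept the definition of $\psi$. Then
\[
t\,g_t(x)=-y\log y+y\E\log(P_y+1)=y\,\phi(y),\qquad \phi(y):=\E\log(P_y+1)-\log y .
\]
This $\phi$ need not coincide with $\psi$. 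The link should come through integration by parts against the tail function $\vec\nu$, since $C_\alpha$ involves $y^{-\alpha}$ rather than $y^{-\alpha-1}$. Indeed
\[
\int_0^1 g_t\,\mathrm d\nu=-\int_0^1 g_t\,\mathrm d\vec\nu=[\text{boundary}]+\int_0^1 g_t'(x)\,\vec\nu(x)\,\dx .
\]
Differentiating, $g_t'(x)=\frac{\mathrm d}{\mathrm dy}[y\phi(y)]$. Using $\frac{\mathrm d}{\mathrm dy}\E f(P_y)=\E[f(P_y+1)-f(P_y)]$, one gets
\[
\frac{\mathrm d}{\mathrm dy}\bigl[y\E\log(P_y+1)\bigr]=\E\log(P_y+1)+y\E[\log(P_y+2)-\log(P_y+1)],
\]
and applying the Poisson shift identity backwards this equals $\E[(P_y+1-y)\log(P_y+1)]$. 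This confirms $g_t'(x)=\psi(tx)$. The boundary terms are handled as follows. At $x=1$ the term is $g_t(1)\vec\nu(1)$, which is exponentially small relative to the target because $\E\log(P_t+1)-\log t=O(1/t)$ gives $g_t(1)=O(t^{-1})$. Alternatively, take the boundary at $x=1$ where $\vec\nu(1)$ is finite and bound it directly. At $0$, $g_t(x)\vec\nu(x)\to0$ because $g_t(x)\approx -x\log(tx)$ is $O(x\log(1/x))$ and $x^{1-\alpha}L\to0$.

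\textbf{Change of variables and Abelian step.} Substituting $x=y/t$,
\[
\int_0^1 \psi(tx)\,x^{-\alpha}L(1/x)\,\dx=t^{\alpha-1}L(t)\int_0^t \psi(y)\,y^{-\alpha}\frac{L(t/y)}{L(t)}\,\dy .
\]
The ratio $L(t/y)/L(t)\to1$ pointwise, so the result follows from dominated convergence once we have a dominating function. Here we use Potter's bound $L(t/y)/L(t)\le A\max(y^{\delta},y^{-\delta})$ for $t/y\ge x_0$, combined with the behaviour of $\psi$ at both ends.
\begin{itemize}
\item Near $0$: $\psi(y)\sim-\log y$, and $y^{-\alpha-\delta}\log(1/y)$ is integrable.
\item At $\infty$: a Taylor expansion of $\log(P_y+1)$ around $y$ gives $\psi(y)=O(y^{-1})$. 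More precisely, $\E[(P_y+1-y)\log(P_y+1)]=\log y+1+O(1/y)$, so $y^{-\alpha+\delta}\psi$ is integrable since $\alpha>0$.
\end{itemize}
The region $y\in(t/x_0,t]$ lies within $x\in(1/x_0,1]$, where $\vec\nu$ is bounded. Its contribution is $O(t^{-1}\cdot t\,\sup_{y\ge t/x_0}|\psi(y)|)=O(t^{-1})$, which is $o(t^{\alpha-1}L(t))$.

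\textbf{Main obstacle.} The hardest part is the asymptotic expansion $\psi(y)=O(y^{-1})$ at infinity, together with controlling the sign and finiteness of $C_\alpha$. This needs a careful second-order expansion of $\E[(P+1)\log(P+1)]$ for Poisson $P$, with the tail of $P_y$ handled through Chernoff bounds. Positivity of $C_\alpha$ (which makes the $\sim$ meaningful) follows from $g_t\ge0$, by Jensen's inequality applied to $u\log u$.
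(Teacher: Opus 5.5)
Your proposal is correct and follows the paper's proof essentially step for step: the same per-atom function $x\Psi(tx)$ (your $g_t$), the same integration by parts against $\vec\nu$ with $\frac{\mathrm d}{\dx}\bigl(x\Psi(tx)\bigr)=\psi(tx)$, the same substitution $y=tx$, and Potter bounds plus dominated convergence driven by the behaviour of $\psi$ at $0$ and at $\infty$. Your separate $O(t^{-1})$ treatment of $y\in(t/x_0,t]$ is, if anything, more careful than the paper's global Potter bound, whose stated hypothesis ($L$ bounded away from $0$ on compacts) fails for $L(u)=u^{-\alpha}\vec\nu(1/u)$, since this vanishes for $1\le u<1/\max_j p_j$; the bound $|\psi(y)|\le y^{-1}$ ($y\ge1$) that you only sketch is obtained in the paper without Taylor expansions or Chernoff bounds, by writing $\psi(y)=T_1+T_2-1$ with $T_1=y\E\bigl[\log\bigl(1+\frac{1}{P_y+1}\bigr)\bigr]\in[1-y^{-1},1]$ and $T_2=\E\bigl[\log\frac{P_y+1}{y}\bigr]\in[0,y^{-1}]$ via $\frac{x}{1+x}\le\log(1+x)\le x$, $\log u\ge 1-u^{-1}$, Jensen, and the exact values of $\E[1/(P_y+1)]$ and $\E[1/(P_y+2)]$ (note also that $g_t\ge0$ alone gives only $C_\alpha\ge0$, whereas strict positivity follows from $C_\alpha=\alpha\int_0^\infty y^{-\alpha}\bigl(\E\log(P_y+1)-\log y\bigr)\,\dy$, whose integrand is positive).
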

Combining Lemma~\ref{lem:de-pois} with Theorem~\ref{thm:Main} immediately yields the following corollary for the original fixed-sample model.

\begin{cor}\label{cor:main}
Let Assumption~\ref{ass:1} hold.
Then, as $n\to \infty$,
\[
|\bias(n)|
\sim n^{\alpha-1}L(n)C_\alpha.
\]
\end{cor}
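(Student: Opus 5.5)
The statement immediately above is Corollary~\ref{cor:main}, and I will obtain it directly from Theorem~\ref{thm:Main} and Lemma~\ref{lem:de-pois}, with no further analysis of the multinomial model. The plan has three steps: decompose the bias, show the de-Poissonization error is negligible, and transfer the asymptotic equivalence.

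\emph{Step 1 (decomposition).} Under Assumption~\ref{ass:1}, $H(\bm p)<\infty$ by the remark following that assumption. Hence both Lemma~\ref{lem:de-pois} and Theorem~\ref{thm:Main} apply. We write
\[
H(\bm p)-\E[\hat H_n] = \bigl(H(\bm p)-\E[\hat H(n)]\bigr) + \bigl(\E[\hat H(n)]-\E[\hat H_n]\bigr).
\]
Evaluating Theorem~\ref{thm:Main} at $t=n$ shows that the first term is asymptotic to $n^{\alpha-1}L(n)C_\alpha$. Lemma~\ref{lem:de-pois} bounds the second term in absolute value by $n^{-1}$.

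\emph{Step 2 (the error is negligible).} This is the only point needing care. We must check that
\[
\frac{n^{-1}}{n^{\alpha-1}L(n)} = \frac{1}{n^{\alpha}L(n)} \to 0.
\]
This holds because $\alpha>0$: every slowly varying function satisfies $x^{\varepsilon}L(x)\to\infty$ for each $\varepsilon>0$ (Potter bounds), and we apply this with $\varepsilon=\alpha$. We also need $0<C_\alpha<\infty$, so that dividing by $C_\alpha$ is legitimate. This is implicit in Theorem~\ref{thm:Main}: for the relation ``$\sim$'' to be meaningful the constant must be finite and positive. It is also consistent with the bias having a definite sign, by concavity of $-x\log x$ and Jensen's inequality.

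\emph{Step 3 (conclusion).} Dividing the decomposition by $n^{\alpha-1}L(n)C_\alpha$, the first term tends to $1$ and the second tends to $0$, so
\[
\frac{H(\bm p)-\E[\hat H_n]}{n^{\alpha-1}L(n)C_\alpha} \to 1.
\]
In particular $H(\bm p)-\E[\hat H_n]$ is eventually positive. This agrees with the identity $|\bias(n)| = H(\bm p)-\E[\hat H_n]$ used in Section~\ref{sec:setting}, and it gives $|\bias(n)|\sim n^{\alpha-1}L(n)C_\alpha$.

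There is no real obstacle beyond Step 2. The substantive work lies in Theorem~\ref{thm:Main} and Lemma~\ref{lem:de-pois}, both of which are assumed here.
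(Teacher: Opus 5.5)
Your proposal is correct and is exactly the argument the paper means when it says the corollary follows immediately from Lemma~\ref{lem:de-pois} and Theorem~\ref{thm:Main}: the de-Poissonization error $n^{-1}$ is $o(n^{\alpha-1}L(n))$ because $n^{\alpha}L(n)\to\infty$. On $C_\alpha>0$, your Jensen remark only gives $C_\alpha\ge 0$, but positivity is presupposed by the ``$\sim$'' in Theorem~\ref{thm:Main}, and it can also be checked directly: integrating by parts gives $C_\alpha=\alpha\int_0^\infty y^{-\alpha}\Psi(y)\,\dy$, and $\Psi(y)\ge e^{-y}$ as in the proof of Lemma~\ref{lem:psi}, so nothing is missing.
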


Figure~\ref{fig:C_alpha} compares the theoretical constant $C_\alpha$ with the empirical coefficient 
\[
\hat{C}_\alpha(n) \coloneqq \frac{H(\bm{p}) - B^{-1} \sum_{i=1}^B \hat{H}^{(i)}_n}{n^{\alpha-1}L(n)} ,
\]
which is computed from $B=50000$ Monte Carlo replications with sample size $n=10^5$, where $\bm{p}$ is given by the Zeta distribution with parameter $\alpha^{-1}$.
The numerical results indicate that $C_\alpha$ monotonically increases with $\alpha$, exhibiting a sharp growth as $\alpha$ approaches $1$. 
Furthermore, the values of $\hat{C}_\alpha(n)$ are in close agreement with those of $C_\alpha$, consistent with the asymptotic estimates of Theorem~\ref{thm:Main}.

\begin{figure}[t]
\centering
\begin{subfigure}[b]{0.58\textwidth}
    \vspace{0pt}
    \centering
    \includegraphics[width=\textwidth]{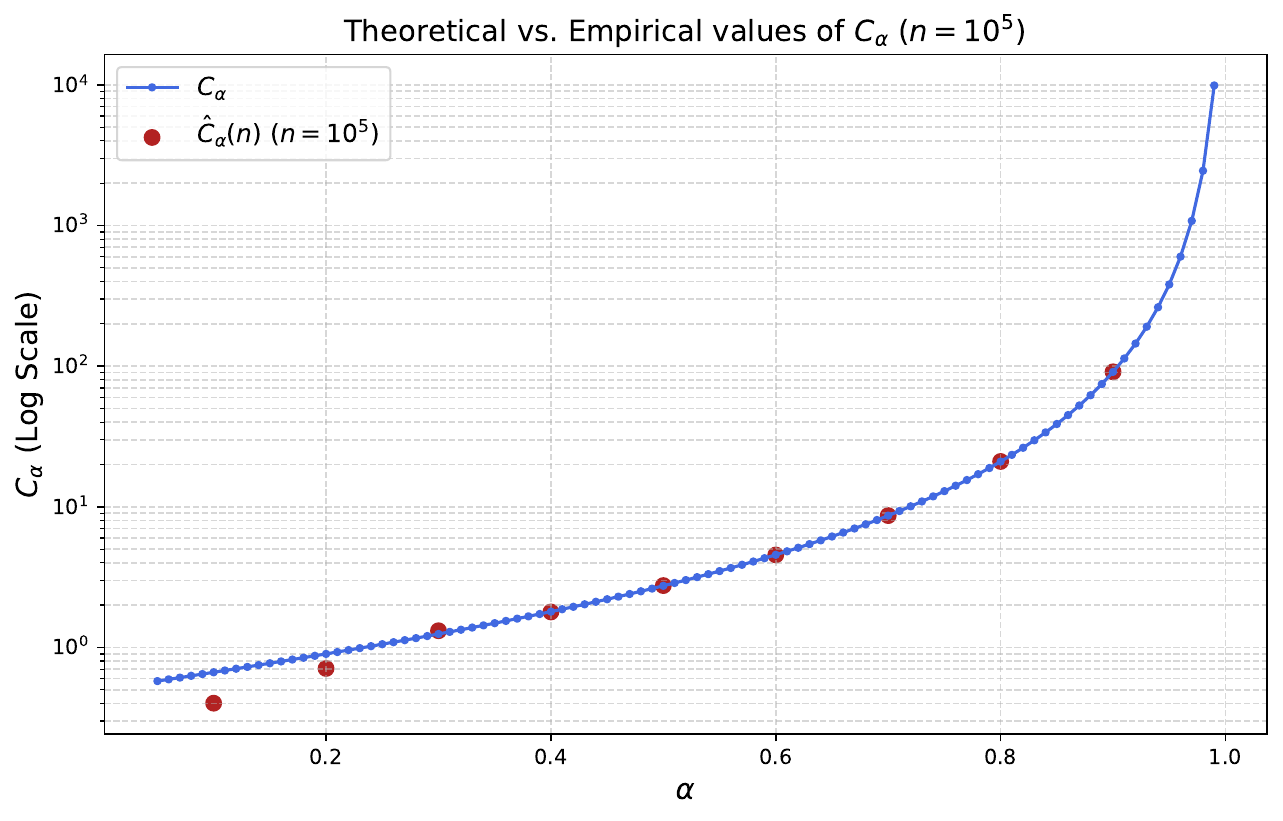}
    \caption{Logarithmic scale plot of $C_\alpha$.}
    \label{fig:C_alpha_plot}
\end{subfigure}
\hfill
\begin{subtable}[b]{0.4\textwidth}
    \vspace{0pt}
    \centering
    \small
    \begin{tabular}{ccc}
    \hline
    $\alpha$ & $C_\alpha$ & $\hat{C}_\alpha(n)$\\
    \hline
    0.1 & 0.666 & 0.402\\
    0.2 & 0.900 & 0.709\\
    0.3 & 1.247 & 1.314\\
    0.4 & 1.798 & 1.788\\
    0.5 & 2.744 & 2.754\\
    0.6 & 4.558 & 4.546\\
    0.7 & 8.659 & 8.660\\
    0.8 & 20.966 & 21.008\\
    0.9 & 91.065 & 91.132\\
    \hline
    \\
    \end{tabular}
    \caption{Theoretical and empirical values of $C_\alpha$.}
    \label{tab:C_alpha}
\end{subtable}

\caption{Comparison of theoretical and empirical values of $C_\alpha$.}
\label{fig:C_alpha}
\end{figure}

\section{Proofs}\label{sec:proof}

\subsection{Proof of Lemma~\ref{lem:de-pois}}
\begin{proof}[Proof of Lemma~\ref{lem:de-pois}]
Let $x\in(0,1)$.
Consider independent random variables $B_{n-1,x} \sim \Bin(n-1,x)$ and $B_{1,x} \sim \Bin(1,x)$, and define $B_{n,x} \coloneqq B_{n-1,x} + B_{1,x}$.
Moreover, consider a random variable $P_{nx} \sim \Pois(nx)$.
Since $H(\bm{p})$ is finite, $\E[\hat{H}_n]$ and $\E[\hat{H}(n)]$ are finite.
From the convexity of $x\log x$ and Lemma 2.1 of~\cite{Barman_2022},
\begin{align*}
    \left|\E[\hat H_n]-\E[\hat H(n)]\right|
    &=\left|\int_0^1\E\left[\frac{B_{n,x}}{n}\log\left(\frac{B_{n,x}}{n}\right)\right]-\E\left[\frac{P_{nx}}{n}\log\left(\frac{P_{nx}}{n}\right)\right]\nu(\dx)\right|\\
    &\le\frac{1}{n}\int_0^1\left|\E\left[B_{n,x}\log B_{n,x} \right]-\E\left[P_{nx}\log P_{nx} \right]\right|\nu(\dx)\\
    &=\frac{1}{n}\int_0^1\left(\E\left[P_{nx}\log P_{nx} \right]-\E\left[B_{n,x}\log B_{n,x} \right]\right)\nu(\dx).
\end{align*}
Moreover,
\[ 
\E[B_{n,x}\log B_{n,x}]=nx\E[\log(B_{n-1,x}+1)] , \quad 
\E[P_{nx}\log P_{nx}]=nx\E[\log(P_{nx}+1)] ,
\]
and
\begin{align*}
\E\left[\log(B_{n,x} + 1 )\right]
&= \E\left[\log(B_{n-1,x} +B_{1,x} + 1 )\right]\\
&= \E\left[\E\left[\log(B_{n-1,x} +B_{1,x} + 1 )\mid B_{n-1,x}\right]\right]\\
&=x\E\left[\log(B_{n-1,x} + 2 )\right]+(1-x)\E\left[\log(B_{n-1,x} + 1 )\right].
\end{align*}
Hence, from the concavity of $\log(x+1)$ and Lemma 2.1 of~\cite{Barman_2022},
\begin{align*}
    \left|\E[\hat H_n]-\E[\hat H(n)]\right|
    &\le\int_0^1x\left(\E\left[\log\left(P_{nx}+1\right)\right]-\E\left[\log\left(B_{n-1,x}+1\right)\right]\right)\nu(\dx)\\
    &\le\int_0^1x\left(\E\left[\log\left(B_{n,x}+1\right)\right]-\E\left[\log\left(B_{n-1,x}+1\right)\right]\right)\nu(\dx)\\
    &=\int_0^1x^2\left(\E\left[\log\left(B_{n-1,x}+2\right)\right]-\E\left[\log\left(B_{n-1,x}+1\right)\right]\right)\nu(\dx)\\
    &\le\int_0^1x^2\E\left[\frac{1}{B_{n-1,x}+1}\right]\nu(\dx)\\
    &=\int_0^1x^2\cdot\frac{1}{nx}(1-(1-x)^n)\nu(\dx)\\
    &\le\frac{1}{n}\int_0^1x\nu(\dx)=\frac{1}{n} .
\end{align*}
This completes the proof.
\end{proof}

\subsection{Proof of Theorem~\ref{thm:Main}}
We first establish the following lemma concerning $\psi(y)= -\log y-1+\E[(P_y+1-y)\log(P_y+1)]$ $(y > 0)$, where $P_y \sim \Pois(y)$.

\begin{lem}\label{lem:psi}
For all $y>0$,
\begin{align*}
\begin{array}{ll}
|\psi(y)| \leq C(1-\log y) &(0 < y \leq 1),\\
|\psi(y)| \leq y^{-1} &(y \geq 1 ).
\end{array}
\end{align*}
\end{lem}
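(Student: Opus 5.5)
The plan is to first rewrite $\psi$ in a form where the Poisson expectations can be bounded directly. The tool is the Poisson identity $\E[(P_y-y)h(P_y)]=y\,\E[h(P_y+1)-h(P_y)]$, valid for any $h$ with finite moments. Taking $h(k)=\log(k+1)$ gives
\[
\psi(y)=\E[\log(P_y+1)]-\log y-1+y\,\E\!\left[\log\!\Big(1+\frac{1}{P_y+1}\Big)\right].
\]
An equivalent route is $\E[P_y\log P_y]=y\E[\log(P_y+1)]$, which gives $\psi(y)=\E[g(P_y)]-\log y-1$ with $g(k)=(k+1)\log(k+1)-k\log k$. By the mean value theorem, $g(k)=1+\log\xi_k$ for some $\xi_k\in(k,k+1)$. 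I will mainly use the first representation, together with the explicit formula $\E[1/(P_y+1)]=(1-e^{-y})/y$.

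For $0<y\le1$ everything is crude. Jensen's inequality gives $0\le \E[\log(P_y+1)]\le\log(1+y)\le\log 2$. Since $\log(1+u)\le u$, the last term satisfies $0\le y\E[\log(1+1/(P_y+1))]\le 1-e^{-y}\le 1$. Hence $|\psi(y)|\le -\log y+1+\log 2+1\le C(1-\log y)$ with, say, $C=3$.

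For $y\ge1$, the upper bound is also short. Jensen gives $\E[\log(P_y+1)]\le\log(y+1)\le\log y+1/y$. Also $y\E[\log(1+1/(P_y+1))]\le y\E[1/(P_y+1)]=1-e^{-y}\le1$. Therefore $\psi(y)\le 1/y-e^{-y}\le 1/y$.

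The main obstacle is the matching lower bound $\psi(y)\ge -1/y$. I will treat the two terms as follows.
\begin{itemize}
\item For the second term, use $\log(1+u)\ge u-u^2/2$. This gives
\[
y\E\Big[\log\Big(1+\tfrac1{P_y+1}\Big)\Big]\ge 1-e^{-y}-\tfrac{y}{2}\E[(P_y+1)^{-2}].
\]
Then use $\E[(P_y+1)^{-2}]\le \E[1/((P_y+1)(P_y+2))]\cdot 2 \le 2/y^2$, since the factorial-moment formula gives $\E[1/((P_y+1)(P_y+2))]\le y^{-2}$.
\item For the first term, write $\log(P_y+1)-\log y=\log(1+Z)$ with $Z=(P_y+1-y)/y$. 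Use the lower bound $\log(1+z)\ge z-z^2/(2(1+z)\wedge\cdot)$, or more concretely $\log(1+z)\ge z-\tfrac{z^2}{2}\cdot\frac{1}{\min(1,1+z)}$. The moments $\E[Z]=1/y$ and $\E[Z^2]=(y+1)/y^2$ then make the leading terms cancel: $\tfrac1y-\tfrac{1}{2y}$ from the first term against $-\tfrac1y$ from the second, leaving $O(1/y)$ with a controllable sign.
\end{itemize}
The delicate part is the event $\{P_y+1<y\}$ (i.e.\ $Z<0$), where the quadratic lower bound degrades. There I plan either to use the sharper inequality $\log(1+z)\ge z/(1+z/2)$ for $z>-1$, or to split off $\{P_y\le y/2\}$ and bound its contribution by a Chernoff estimate of order $e^{-cy}\log y$.

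If the constant $1$ in $y^{-1}$ cannot be obtained for all $y\ge1$ this way, I would fall back in two steps. First, verify the inequality numerically or by a direct computation on a compact range $1\le y\le y_0$. Second, use the asymptotic expansion $\psi(y)=\tfrac{1}{12y}\cdot(\text{const})+O(y^{-2})$, obtained from Taylor expansion to third order with Poisson central moments, for $y\ge y_0$. Only the order $y^{-1}$ matters for the use of this lemma in the proof of Theorem~\ref{thm:Main}: it guarantees $\int_0^\infty y^{-\alpha}|\psi(y)|\,\dy<\infty$ for $\alpha\in(0,1)$ and supplies the dominating function.
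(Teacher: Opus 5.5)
Your case $0<y\le 1$ is correct, and a valid, more direct alternative to the paper's two-sided Jensen sandwich. Your upper bound $\psi(y)\le y^{-1}-e^{-y}$ for $y\ge1$ is exactly the paper's argument.

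\textbf{The gap.} You never establish the lower bound $\psi(y)\ge -y^{-1}$ for $y\ge 1$. Write $T_1=y\E[\log(1+1/(P_y+1))]$ and $T_2=\E[\log((P_y+1)/y)]$, so that $\psi=T_1+T_2-1$. Three problems arise.
\begin{enumerate}
\item Your second-order route does not close. Even if $\log(1+z)\ge z-z^2/2$ held for all $z>-1$, your estimates $T_1\ge 1-e^{-y}-y^{-1}$ and $T_2\ge \E[Z]-\tfrac12\E[Z^2]$ only give $\psi(y)\ge -e^{-y}-\tfrac{1}{2y}-\tfrac{1}{2y^2}$. This lies below $-y^{-1}$ for $1\le y\lesssim 2$; at $y=1$ it equals $-1-e^{-1}$.
\item Your proposed repair on $\{Z<0\}$, namely $\log(1+z)\ge z/(1+z/2)$, is false there. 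The function $z\mapsto\log(1+z)-2z/(2+z)$ has derivative $z^2/((1+z)(2+z)^2)\ge 0$ and vanishes at $0$, so the inequality reverses on $(-1,0)$ (e.g.\ at $z=-\tfrac12$).
\item The fallbacks do not help. The Chernoff split yields unspecified constants that cannot deliver the constant $1$ near $y=1$, and a numerical check on $[1,y_0]$ is not a proof.
\end{enumerate}
Your asymptotic expansion is also off: the $y^{-1}$ terms cancel, and in fact $\psi(y)=-\tfrac{1}{12y^2}+O(y^{-3})$.

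\textbf{The missing observation.} No second-order information on $T_2$ is needed. Apply the first-order inequality $\log v\ge 1-1/v$ with $v=(P_y+1)/y$, together with the identity $\E[1/(P_y+1)]=(1-e^{-y})/y$ that you already use. This gives $T_2\ge 1-y\E[1/(P_y+1)]=e^{-y}\ge 0$. Combined with your own bound $T_1\ge 1-e^{-y}-y^{-1}$, it yields $\psi(y)\ge -y^{-1}$ at once.

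This is the paper's argument. The paper bounds $T_1$ via $\log(1+u)\ge u/(1+u)$ and $y\E[1/(P_y+2)]=1-(1-e^{-y})/y$. Your second-order estimate gives exactly this same bound if you do not loosen $\E[1/((P_y+1)(P_y+2))]=e^{-y}y^{-2}(e^y-1-y)$ to $y^{-2}$.

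Finally, you are right that only the order $y^{-1}$ matters in Theorem~\ref{thm:Main}. But the lemma as stated asserts the constant $1$, so its proof must establish the lower bound with that constant.
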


\begin{proof}[Proof of Lemma~\ref{lem:psi}]
    We first consider the case $0<y\le1$.
    It follows from the Jensen inequality that
    \begin{align*}
        \psi(y)
        &=-\log y-1+\E[(P_y+1-y)\log(P_y+1)]\\
        &\ge-\log y-1+\log(y+1)\\
        &=\log(1+y^{-1})-1.
    \end{align*}
    On the other hand, since 
    \[ 
    \E[P_y\log P_y] =  y\E[\log(P_y+1)],
    \]
    it also follows from the Jensen inequality that
    \begin{align*}
        \psi(y)
        &=-\log y-1+\E[(P_y+1)\log(P_y+1)-P_y\log P_y]\\
        &\le-\log y-1+(y+1)\log(y+1)-y\log y\\
        &=(y+1)\log (1 + y^{-1})-1.
    \end{align*}
    Therefore,
    \[
    \log (1 + y^{-1}) - 1 \leq \psi(y)\le(y+1) \log(1 + y^{-1})-1.
    \]
    Hence,
    \[
    |\psi(y)| \leq \max\{|\log(1 + y^{-1}) - 1 |, |(y+1) \log(1+y^{-1})-1 | \}.
    \]
    Since $\log(1 + y^{-1})\le\log 2-\log y$,
    \begin{align*}
        |\log(1 + y^{-1})-1 |
        &\le(1+\log2)-\log y\\
        |(y+1) \log(1 + y^{-1}) - 1 |
        &\le(2\log2+1)-2\log y.
    \end{align*}
    This proves the desired result for $0<y\le1$.
    
    Next, we consider the case $y\ge1$.
    Observe that
    \begin{align*}
        \psi(y)
        &=-\log y-1+\E[P_y\log (P_y+1)]-y\E[\log(P_y+1)]+\E[\log(P_y+1)]\\
        &=y\E\left[\log\left(1+\frac{1}{P_y+1}\right)\right]+\E\left[\log\left(\frac{P_y+1}{y}\right)\right]-1.
    \end{align*}
    Denote the first term by $T_1$ and the second term by $T_2$.
    Since $x/(x+1) \le\log(1+x)\le x$,
    \[
    \frac{1}{P_y+2}\le\log\left(1+\frac{1}{P_y+1}\right)\le \frac{1}{P_y+1}.
    \]
    Therefore,
    \[
    y\E\left[\frac{1}{P_y+2}\right]=1-\frac{1-e^{-y}}{y}\le T_1\le y\E\left[\frac{1}{P_y+1}\right]=1-e^{-y}
    \]
    Hence, $1 - y^{-1} \leq T_1 \leq 1$.
    By the Jensen inequality, $T_2 \leq \log(1 + y^{-1}) \leq y^{-1}$, while 
    \[
    \E\left[\log\left(\frac{P_y+1}{y}\right)\right]\ge \E\left[1-\frac{y}{P_y+1}\right]=e^{-y}.
    \]
    Therefore, $0 \leq e^{-y} \leq T_2 \leq y^{-1}$.
    Combining the above estimates yields $- y^{-1} \leq \psi(y) \leq y^{-1}$.
    This proves the desired result for $y\ge1$, and hence completes the proof.
\end{proof}

We recall the following lemma.

\begin{lem}[Potter bounds; \cite{Bingham_1987}, Theorem~1.5.6(ii)]\label{lem:potter}
    If $\ell: [0,\infty) \to (0,\infty)$ is a slowly varying function at infinity that is bounded away from $0$ and $\infty$ on every compact subset of $[0,\infty)$, then for every $\delta>0$ there exists $A=A(\delta)>1$ such that 
    \[
    \frac{\ell(y)}{\ell(x)}\le A\max\left\{\left(\frac{y}{x}\right)^\delta,\left(\frac{y}{x}\right)^{-\delta}\right\}\quad(x>0,y>0).
    \]
\end{lem}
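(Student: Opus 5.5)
The plan is to derive the global bound from the Karamata representation theorem, which controls $\ell$ at infinity, together with the assumed local boundedness, which controls $\ell$ on compacts. No result of the present paper is needed.

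\emph{Large arguments.} By the representation theorem for slowly varying functions (\cite{Bingham_1987}, Theorem~1.3.1), there exist $a>0$, a measurable $c(\cdot)$ with $c(x)\to c\in(0,\infty)$, and $\varepsilon(u)\to0$ such that
\[
\ell(x)=c(x)\exp\Bigl(\int_a^x \frac{\varepsilon(u)}{u}\,du\Bigr)\qquad(x\ge a).
\]
Given $\delta>0$, I would choose $X\ge a$ so large that $|\varepsilon(u)|\le\delta$ for $u\ge X$ and $c(x)/c(y)\le 2$ for $x,y\ge X$. Then for $x,y\ge X$,
\[
\frac{\ell(y)}{\ell(x)}=\frac{c(y)}{c(x)}\exp\Bigl(\int_x^y\frac{\varepsilon(u)}{u}\,du\Bigr)\le 2\exp\bigl(\delta\,|\log(y/x)|\bigr)=2\max\{(y/x)^{\delta},(y/x)^{-\delta}\}.
\]
This is the only step with real content. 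I would cite the representation theorem rather than reprove it; it rests on the uniform convergence theorem, and that is the genuine obstacle, namely measurability together with Egorov-type arguments. Reproducing that proof is unnecessary here.

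\emph{Small and mixed arguments.} By hypothesis, $\ell$ is bounded away from $0$ and $\infty$ on $[0,X]$. Set $M:=\sup_{[0,X]}\ell/\inf_{[0,X]}\ell<\infty$. I would then split into cases.
\begin{itemize}
\item If $x,y\le X$, then $\ell(y)/\ell(x)\le M\le M\max\{(y/x)^\delta,(y/x)^{-\delta}\}$, because the maximum is at least $1$.
\item If $x\le X<y$, write $\ell(y)/\ell(x)=\bigl(\ell(y)/\ell(X)\bigr)\bigl(\ell(X)/\ell(x)\bigr)$. The first factor is at most $2(y/X)^\delta$ by the large-argument bound. The second factor is at most $M$. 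Since $X\ge x$, we have $(y/X)^\delta\le(y/x)^\delta$.
\item If $y\le X<x$, the symmetric factorization gives $\ell(y)/\ell(x)\le M\cdot 2(X/x)^{\delta}\le 2M(y/x)^{-\delta}$, using $X\ge y$.
\end{itemize}

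Taking $A:=\max\{2,2M\}+1>1$ then yields the stated inequality for all $x,y>0$. The only care needed in the bookkeeping is the mixed case, where the boundary point $X$ must be inserted so that each factor falls into a region where a bound is already available.
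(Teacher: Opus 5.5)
Your proof is correct and follows the same route as the source the paper relies on. The paper gives no argument of its own and only cites Theorem~1.5.6(ii) of Bingham--Goldie--Teugels, which is proved as you do: the representation theorem handles $x,y\ge X$, and local boundedness on $[0,X]$ plus inserting the point $X$ handles the remaining cases. The representation theorem needs $\ell$ to be measurable, which is part of the definition of slow variation in that reference.

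There is one sign slip to fix. In the case $y\le X<x$, the large-argument bound gives $\ell(X)/\ell(x)\le 2(X/x)^{-\delta}=2(x/X)^{\delta}$, not $2(X/x)^{\delta}$. With this corrected exponent, $y\le X$ gives $(x/X)^{\delta}\le (x/y)^{\delta}=(y/x)^{-\delta}$, so your final bound $2M(y/x)^{-\delta}$ still holds.
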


We then provide the proof of Theorem~\ref{thm:Main}.

\begin{proof}[Proof of Theorem~\ref{thm:Main}]
By the definition of $\hat H(t)$,
\[
\E[\hat H(t)]=-\int_0^1x\E\left[\log\left(\frac{P_{tx}+1}{t}\right)\right] \nu(\dx),
\]
where $P_{tx}\sim\Pois(tx)$.
This implies that
\[
|\biasPois(t)| 
=\int_0^1x\Psi(tx)\nu(\dx)
\]
with $\Psi(tx)=\E[\log(P_{tx}+1)-\log (tx)]$.
In particular,
\[
x\Psi(tx)
= -x\log(tx) + \sum_{k \geq0}  u_k(x),
\]
where 
\[
u_k(x)\coloneqq  \frac{t^k x^{k+1} e^{-tx}}{k!} \log(k+1) \quad (0<x<1).
\]

Differentiating yields
\[
u_k'(x)=\frac{(tx)^ke^{-tx}}{k!}(k+1-tx)\log(k+1) .
\]
Hence,
\[
|u'_k(x)|\le\frac{t^k}{k!}(k+1+t)\log(k+1) \eqqcolon M_k .
\]
Moreover,
\[
\lim_{k\to\infty}\frac{M_{k+1}}{M_k}=\lim_{k\to\infty}\frac{t}{k+1}\times\frac{k+2+t}{k+1+t}\times\frac{\log(k+2)}{\log(k+1)}=0.
\]
Therefore, by the ratio test, $\sum_{k \geq 0} M_k$ converges.
By the Weierstrass $M$-test, $\sum_{k \geq 0} u_k'(x)$ converges uniformly on $(0,1)$.
Therefore,
\begin{align*}
    \frac{ \mathrm{d} }{\dx}(x\Psi(tx))
    &=-\log(tx)-1+\sum_{k \geq 0} \frac{(tx)^ke^{-tx}}{k!}(k+1-tx)\log(k+1)\\
    &=-\log(tx)-1+\E[(P_{tx}+1-tx)\log(P_{tx}+1)]\\
    &=\psi(tx).
\end{align*}
Furthermore, integration-by-parts yields
\begin{align*}
    \int_0^1x\Psi(tx)\nu(\dx)
    &=-\Psi(t)\vec\nu(1)+\lim_{\epsilon\to0}\epsilon\Psi(t\epsilon)\vec\nu(\epsilon)+\int_0^1\vec\nu(x) \mathrm{d} (x\Psi(tx))\\
    &=\int_0^1\vec\nu(x)\psi(tx) \dx.
\end{align*}
It follows from $0 \le \E[\log(P_{t\epsilon}+1)] \leq t\epsilon$ that $|\Psi(t\epsilon)| = O(-\log\epsilon)$ as $\epsilon\to0$, which, together with Assumption~\ref{ass:1}, implies $\lim_{\epsilon\to0}\epsilon\Psi(t\epsilon)\vec\nu(\epsilon) = 0$.
The change of variables $y=tx$ yields
\[
| \biasPois(t) |
=\int_0^t\psi(y)\vec\nu(y/t)t^{-1} \dy.
\]
Since $\vec\nu(x)= x^{-\alpha}L\left(1/x\right)$,
\begin{align*}
| \biasPois(t) |  
&=\int_0^t\left(\frac{y}{t}\right)^{-\alpha}L\left(\frac{t}{y}\right)\psi(y)t^{-1} \dy\\
&=t^{\alpha-1}L(t)\int_0^ty^{-\alpha}\frac{L(t/y)}{L(t)}\psi(y) \dy.
\end{align*}
Hence,
\[
\frac{ | \biasPois(t) |  }{t^{\alpha-1}L(t)}=\int_0^\infty y^{-\alpha}\frac{L(t/y)}{L(t)}\psi(y)I\{y\le t\} \dy.
\]
Therefore, it suffices to determine 
$\lim_{t\to\infty}\int_0^\infty f_t(y)\dy$, where 
\[ f_t(y)\coloneqq y^{-\alpha} \frac{L(t/y)}{L(t)}\psi(y)I\{y\le t\} \quad (y > 0). \]
By Lemma~\ref{lem:potter}, for any $\delta \in (0, \min\{\alpha,1-\alpha\})$,
\[
|f_t(y)|\le Ay^{-\alpha}\max\{y^{\delta},y^{-\delta}\}|\psi(y)|.
\]
Define 
\[ g(y)=Ay^{-\alpha}\max\{y^{\delta},y^{-\delta}\}|\psi(y)| \quad (y>0),\]
which is integrable.
Indeed,
\[
\int_0^\infty g(y) \dy
=A\left(\int_0^1y^{-(\alpha+\delta)}|\psi(y)| \dy 
+ \int_1^\infty y^{-(\alpha-\delta)}|\psi(y)| \dy \right) ,
\]
and
\begin{align*}
\int_0^1y^{-(\alpha+\delta)}|\psi(y)| \dy
&\leq C\int_0^1y^{-(\alpha+\delta)}(1-\log y) \dy <\infty , \\
\int_1^\infty y^{-(\alpha-\delta)}|\psi(y)| \dy
&\leq \int_1^\infty y^{- (\alpha - \delta + 1)} \dy
<\infty
\end{align*}
by Lemma~\ref{lem:psi}.
Therefore, by the dominated convergence theorem,
\begin{align*}
    \lim_{t\to\infty}\int_0^\infty y^{-\alpha}\frac{L(t/y)}{L(t)}\psi(y)I\{y\le t\} \dy
    &=\int_0^\infty\lim_{t\to\infty} y^{-\alpha}\frac{L(t/y)}{L(t)}\psi(y)I\{y\le t\} \dy \\
    &=\int_0^\infty y^{-\alpha}\psi(y) \dy
    = C_\alpha.
\end{align*}
This completes the proof.
\end{proof}

\section*{Acknowledgments}
The second author was supported in part by Japan Society for the Promotion of Science KAKENHI Grant Number 25K07133.


\end{document}